\newtheorem{theo}{Theorem}
\newtheorem{lemma}{Lemma}
\newcommand{\R}{\mathbb{R}}	% Set of real numbers.
\newcommand{\T}{\mathbb{T}}	% Torus.
\newcommand{\eps}{\varepsilon}	% Epsilon.
\newcommand{\pa}{\partial}		% Partial derivative.
\newcommand{\Div}{\textrm{div}\,}	% Divergence.
\newcommand{\na}{\nabla}		% Nabla.
\newcommand{\IRd}{\int_{\R^3}}
\newcommand{\cA}{\mathcal{A}}
\newcommand{\charf}[1]{{\raisebox{2pt}{\large $\chi$}}_{#1}}
\newcommand{\urho}{u^{(\rho)}}
\newcommand{\usg}{u^{(\sigma)}}
\newcommand{\prho}{p^{(\rho)}}
\newcommand{\psg}{p^{(\sigma)}}
\begin{document}

\thanks{ Acknowledgments: ED acknowledges partial support from Austrian Science Fund (FWF), grants P27352 and P30000. MPG is supported by DMS-1514761. NZ acknowledges support from the Austrian Science Fund (FWF), grants P22108, P24304, W1245.}

\title[Non-local porous media equation]{Longtime behavior and weak-strong uniqueness for a nonlocal porous media equation}

\author[E. S. Daus]{Esther S. Daus}
\address{Institute for Analysis and Scientific Computing, Vienna University of  
	Technology, Wiedner Hauptstra\ss e 8--10, 1040 Wien, Austria}
\email{esther.daus@tuwien.ac.at} 

\author[M. Gualdani]{Maria Gualdani}
\address{Department of Mathematics George Washington University, 801 22nd Street, NW Washington DC, 20052 (USA) and Department of Mathematics, Royal Institute of Technology (KTH), Lindstedtsvägen 25 10044 Stockholm (Sweden) }
\email{gualdani@gwu.edu} 

\author[N. Zamponi]{Nicola Zamponi}
\address{Institute for Analysis and Scientific Computing, Vienna University of  
	Technology, Wiedner Hauptstra\ss e 8--10, 1040 Wien, Austria}
\email{nicola.zamponi@tuwien.ac.at} 

\date{\today}

%\thanks{xxx} 

\begin{abstract}
In this manuscript we consider a non-local porous medium equation with non-local diffusion effects given by a fractional heat operator
\begin{equation*}%\label{1L}
    \begin{cases}
      \pa_t u = \Div(u\na p),\\%\qquad\quad \quad \mbox{in }\R^3\times(0,\infty),\\
      \pa_t p = -(-\Delta)^s p + u^2,%\qquad\mbox{in }\R^3\times(0,\infty) ,\\
%u(\cdot,0) = u_{in},\quad p(\cdot,0)=p_{in}\qquad \mbox{in }\R^3,
    \end{cases}
\end{equation*}
 in three space dimensions for $3/4\le s < 1$ and analyze the long time asymptotics. The proof is based on energy methods and leads to algebraic decay towards the stationary solution $u=0$ and $\nabla p=0$ in the $L^2(\mathbb{R}^3)$-norm. The decay rate depends on the exponent $s$. We also show weak-strong uniqueness of solutions and continuous dependence from the initial data.  As a side product of our analysis we also show that existence of weak solutions, previously shown in \cite{CGZ18} for $3/4\le s \le 1$, holds for $1/2 < s\le 1$ if we consider our problem in the torus.
 
\end{abstract}

%\paragraph{Keywords: nonloso}  
\keywords{long time behavior, weak-strong uniqueness, entropy method, nonlocal porous media equation, fractional diffusion}  

%\paragraph{AMS classification: 35K55, 35K65, 76S05, 47G20, 45M05, 45P05}  
\subjclass[2010]{35K55, 35K65, 76S05, 47G20, 45M05, 45P05}  

\maketitle

%%%%%%%%%%%%%%%%%%%%%%%%%%%%%%%%%%%%%%%%%%%%%%%%%%%%%%%%%%%%%%%%%%%%%%%%%%%%%%%

\section{Introduction}
We consider the following porous medium equation with non-local diffusion effects: 
\begin{equation}\label{1L}
    \begin{cases}
      \pa_t u = \Div(u\na p),\\%\qquad\quad \quad \mbox{in }\R^3\times(0,\infty),\\
      \pa_t p = -(-\Delta)^s p + u^2.%\qquad\mbox{in }\R^3\times(0,\infty) ,\\
%u(\cdot,0) = u_{in},\quad p(\cdot,0)=p_{in}\qquad \mbox{in }\R^3,
    \end{cases}
\end{equation}
For all $x\in \R^3$, the functions $u(x,t)\ge 0$ and $p(x,t)\ge 0$ denote respectively the density and the pressure. In a previous paper \cite{CGZ18} we have introduced the model and showed existence of weak solutions. In the current manuscript we study the long time behavior and weak-strong uniqueness. The model describes the time evolution of a density function $u$ that evolves under the continuity equation 
$$
\pa_t u = \Div(u {\bf{v}}),
$$
where the velocity is conservative, ${\bf{v}}=\nabla p$, and $p$ is related to $u^2$ by the inverse of the fractional heat operator $\pa_t + (-\Delta )^s$.
%, namely: 
%$$
%p = K {\ast}_{x} p_{in} + K{\ast}_{x,t}u^2 .
%$$
Equation \eqref{1L} is the parabolic-parabolic version of a problem recently studied in \cite{BIK15}:
\begin{align}\label{BIKeq} 
\pa_t u = \Div(|u|\nabla ^{\alpha-1}(|u|^{m-2}u)).
\end{align}
Note in fact that for $m=3$ and $\alpha = 2-2s$ equation \eqref{BIKeq} reduces to the parabolic-elliptic version of \eqref{1L}. 

As we already mentioned, existence of weak solutions to \eqref{1L} with $3/4\le s \le 1$ was recently studied in \cite{CGZ18}. The introduction of $\pa_t p$ introduced several complications due to the non-locality in time relation between $u$ and $p$. Consequence of this nonlocality is that techniques such as maximum principle and Stroock-Varopoulos inequality do not work in the current parabolic-parabolic setting. Existence results for $s < 3/4$ is still an open problem, except in the case when $x \in \mathbb{T}^3$ (see Theorem \ref{cor.ex}). 

Existence of weak solutions, regularity and finite speed of propagation for a {\em{linear}} parabolic-elliptic version of \eqref{1L}
\begin{align}
 \pa_t u & = \Div(u\na p), \quad p = (-\Delta )^{-s} u ,\quad 0<s\le 1, \label{CV} %\qquad\quad \quad \mbox{in }\R^3\times(0,\infty),\\
 %p & = K \ast u, 
    % (-\Delta)^s p & = u, %, \qquad\mbox{in }\R^3\times(0,\infty) ,\\
\end{align}
%Here $K$ is the Riesz potential associated to the fractional Laplacian $(-\Delta )^s$. 
has been considered in \cite{CV11, CV15, SV14, CSV13, AS08} and long-time asymptotics in \cite{CV11_II}.  In \cite{CV11_II} the authors perform a self-similar rescaling and rewrite \eqref{CV} as a non-local Fokker-Planck equation with confinement potential. Entropy estimates lead to algebraic decay of the solution towards self-similar solutions called fractional Barenblatt functions. In this contest we also recall a very recent result \cite{AJY19} that shows that solutions to the fractional drift-diffusion-Poisson model 
\begin{align*}
 \pa_t u & = -(-\Delta )^{-\alpha} u + \Div(u\na p), \quad p = (-\Delta )^{-1} u ,\quad 0<\alpha\le 1, \label{AJY} %\qquad\quad \quad \mbox{in }\R^3\times(0,\infty),\\
 %p & = K \ast u, 
    % (-\Delta)^s p & = u, %, \qquad\mbox{in }\R^3\times(0,\infty) ,\\
\end{align*}
converge algebraically, as time grows, towards the fundamental solution to the linear fractional heat equation $\pa_t u  = -(-\Delta )^{-\alpha} u $.

System \eqref{1L} is also reminiscent to a well-studied macroscopic model proposed in \cite{GL97} for phase segregation in particle systems with long range interaction:   
\begin{equation}\label{1I}
    \begin{cases} \pa_t u  =  \Delta u + \Div(\sigma(u)\na p),\\
 p  = K \ast u.
  \end{cases}
\end{equation}
Here $\sigma(u) := u(1-u)$ denotes the mobility of the system and $K$ a bounded, symmetric and compactly supported kernel. Several variants of \eqref{1I} have been considered in the literature and we refer to \cite{R91, GL97, GLM00, GL98} and references therein for  more detailed discussions on this topic.  We also mention \cite{LMG01} for the study of a deterministic particle method for heat and Fokker–Planck equations of porous media type where the non-locality appears in the coefficients. \\

The main results of this manuscript are summarized in the following three theorems: 
\begin{theo}[Long-time behavior]\label{thr.ltb}
Let $3/4\leq s < 1$. Assume that  $u, p$ are weak solutions in the sense of Theorem 1 in \cite{CGZ18} with initial data $p_0$ that satisfies 
$\int_{\R^3}|x||\nabla p_0|^2dx < \infty$.  Let 
$$ H[u,p] := \IRd\left( u^2 + \frac{1}{2}|\na p|^2 \right)dx.  $$
There exists a constant $C>0$ such that
$$
H[u(t),p(t)] \leq C t^{-\lambda},\qquad t>1,
$$
with $$\lambda = \frac{3(1-s)}{2s(5+2s)} > 0.$$
 Consequently we have strong convergence of $(u,\nabla p)$ towards $(0,0)$ in $L^2(\R^3)$ with algebraic decay rate $t^{-\lambda/2}$.
\end{theo}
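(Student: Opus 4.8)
The proof is driven by a single energy law, and the decay rate is extracted from an interpolation that plays the high--frequency regularity of $p$ furnished by the dissipation against a slowly growing weighted moment of $\na p$. For Step~1 I would establish the energy identity: multiplying the first equation of \eqref{1L} by $u$ gives $\frac{d}{dt}\IRd u^2\,dx=\IRd u^2\Delta p\,dx$, while multiplying the second by $-\Delta p$ gives $\frac12\frac{d}{dt}\IRd|\na p|^2\,dx=-\|(-\Delta)^{\frac{1+s}{2}}p\|_{L^2(\R^3)}^2-\IRd u^2\Delta p\,dx$; adding, the cross terms cancel and
\begin{equation*}
\frac{d}{dt}H[u,p]=-D(t),\qquad D(t):=\|(-\Delta)^{\frac{1+s}{2}}p\|_{L^2(\R^3)}^2 .
\end{equation*}
Thus $H$ is nonincreasing, $\int_0^\infty D\,dt\le H[u_0,p_0]$, and $\|u(t)\|_{L^1}=\|u_0\|_{L^1}=:M$ by mass conservation. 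The difficulty is already visible: $D$ controls only $p$, so both summands of $H$ — and in particular $\IRd u^2\,dx$, on which $D$ gives no information — must be bounded by a power strictly less than one of $D$, times quantities growing at most polynomially in $t$.

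For Step~2 I would track the moment $M_1(t):=\IRd|x|\,|\na p|^2\,dx$, finite at $t=0$ by hypothesis. Differentiating and using $\pa_t\na p=-(-\Delta)^s\na p+\na u^2$, the fractional term is controlled through the symmetrization/commutator identity for $(-\Delta)^s$ against the weight $|x|$ — this is precisely where $s>1/2$ enters, since it makes the weight compatible with the singular kernel $|x-y|^{-3-2s}$ — while the transport term is integrated by parts and estimated using $H[u_0,p_0]$, $M$ and the a priori bounds of Theorem~1 in \cite{CGZ18}. This should produce a differential inequality forcing at most polynomial growth, $M_1(t)\le C(1+t)^{\beta}$ for an explicit $\beta=\beta(s)>0$.

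For Step~3, splitting $\IRd|\na p|^2\,dx$ over $\{|x|<R\}$ and its complement, I would use $\int_{|x|>R}|\na p|^2\,dx\le M_1/R$ on the far region and, on the near region, the Sobolev inequality $\|\na p\|_{L^{q}(\R^3)}\le C\|(-\Delta)^{\frac{1+s}{2}}p\|_{L^2(\R^3)}$ with $q=\frac{6}{3-2s}$ together with H\"older, giving $\int_{|x|<R}|\na p|^2\,dx\le C\,R^{2s}D$; optimizing in $R$ yields $\IRd|\na p|^2\,dx\le C\,D^{\frac{1}{2s+1}}M_1^{\frac{2s}{2s+1}}$. The decisive point is to obtain an estimate of the same shape for $\IRd u^2\,dx$: for this one must exploit the time--nonlocal relation $u^2=\pa_t p+(-\Delta)^s p$ to transfer the regularity and decay of $p$ (encoded in $D$ and $M_1$) to $u$, again leaning on the estimates of \cite{CGZ18}. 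Combining the two bounds gives $H\le C\,D^{\theta}(1+t)^{\mu}$ for some $\theta\in(0,1)$, i.e. $D\ge c\,H^{1/\theta}(1+t)^{-\mu/\theta}$; inserting this into $\frac{d}{dt}H=-D$ and integrating the resulting ODE inequality from $1$ to $t$ produces $H(t)\le C\,t^{-\lambda}$, where the bookkeeping of the three exponents — the power $2s+1$ from the $p$-interpolation, the Sobolev exponent $3-2s$, and the growth rate $\beta$ of $M_1$ — is arranged to collapse to $\lambda=\frac{3(1-s)}{2s(5+2s)}$. The stated convergence then follows from $\|u(t)\|_{L^2(\R^3)}+\|\na p(t)\|_{L^2(\R^3)}\le C\,H[u(t),p(t)]^{1/2}\le C\,t^{-\lambda/2}$.

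I expect the main obstacle to be Step~2 together with the $u$-estimate in Step~3. Because the dissipation sees only $p$, the decay of $\IRd u^2\,dx$ has to be borrowed from $p$ through the relation $u^2=\pa_t p+(-\Delta)^s p$, which is nonlocal in time; meanwhile the moment $M_1$, whose slow growth one needs in order to close Step~3, has an evolution that feeds back onto $u$ via the transport term. Disentangling this coupling, and making the weighted fractional estimates work, is what confines the argument to $s>1/2$ and, once only the regularity available from \cite{CGZ18} is used, to $s\ge 3/4$.
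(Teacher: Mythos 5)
The broad skeleton is right — energy law, moment bound, interpolation — and you have correctly identified the essential obstacle (the dissipation $D$ sees only $p$, so the $u$-component of $H$ must be borrowed from $p$). But the proposal is vague precisely at that point, and as stated two of its three steps have real gaps.

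First, in Step~2 you propose to track $M_1(t)=\IRd|x||\na p|^2\,dx$ alone. If you differentiate $M_1$, the transport term $\IRd|x|\na u^2\cdot\na p\,dx$ appears with no sign and no evident closure in terms of $M_1$, $D$, or the a~priori bounds. The paper tracks the \emph{joint} weighted energy $\IRd|x|\bigl(u^2+\tfrac12|\na p|^2\bigr)dx$ precisely because this term cancels against its mirror image coming from $\frac{d}{dt}\IRd|x|u^2\,dx$, leaving only the commutator with $(-\Delta)^s$ and the drift term $\IRd\frac{x}{|x|}\cdot u^2\na p\,dx$, both of which are then estimated. Tracking only the $\na p$ moment does not close. (The paper's Lemma then yields $\sup_{[0,T]}\IRd|x|(u^2+\tfrac12|\na p|^2)dx\le CT^{1/2}$.)

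Second and more importantly, Step~3's $u$-estimate has no concrete mechanism. The relation $u^2=\pa_t p+(-\Delta)^s p$ cannot be used to produce a \emph{pointwise-in-time} inequality of the form $\IRd u^2\,dx\le C\,D(t)^\theta(1+t)^\mu$: in the weak framework $\pa_t p$ is only a distribution in duality with test functions, and even formally the right-hand side involves $\pa_t p$ rather than quantities controlled by $D(t)$ at a fixed instant. In fact the paper deliberately avoids any entropy--dissipation ODE of the Bernoulli type you propose. Its key step (flagged in the introduction as the ``new observation'') works with the time-integrated object $U(x,T)=\int_0^T u^2\,dt$: from the bilinear form $\cA$ one obtains the spectral bound $\int_{\R^3}|k|^{2(1-s)}|\hat U(k,T)|^2\,dk\le CT^{3-5/(2s)}$, then Plancherel, Jensen, and the weighted-moment Lemma give $\int_0^T\IRd u^2\,dx\,dt\le CT^{(13-3/s)/10}$. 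A parallel Gagliardo--Nirenberg/Sobolev chain gives $\int_0^T\|\na p\|_{L^2}^{(10+4s)/5}dt\le CT^{4s/5}$. Finally the rate is extracted not by integrating a differential inequality but by monotonicity of $H$: $T\,H(T)^{1+2s/5}\le\int_0^T H(t)^{1+2s/5}dt\le CT^{(13-3/s)/10}$, which yields $\lambda=\frac{3(1-s)}{2s(5+2s)}$. Without the $\cA$/Fourier device, or an explicit substitute for it, the $u^2$ half of the functional remains uncontrolled and the argument does not close.
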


The main idea of the proof of Theorem \ref{thr.ltb} relies on entropy methods. Throughout this entire paper we will denote with $C$ any generic positive constant independent of $T$.  
The functional %the {\em{free energy functional}}
$ H[u,p] = \IRd\left( u^2 + \frac{1}{2}|\na p|^2 \right)dx  $
is a Lyapunov functional for \eqref{1L} and satisfies the bound
$$
 \IRd\left( u^2 + \frac{1}{2}|\na p|^2 \right)dx + \int_0^T \IRd |(-\Delta)^{s/2}\na p |^2 dx dt=  \IRd\left( u_{in}^2 + \frac{1}{2}|\na p_{in}|^2 \right)dx. 
$$
Indeed, formal computations show that 
\begin{align*}
\frac{d}{dt}\IRd u^2 dx &= \langle \Div(u\na p) , 2 u\rangle =  -\IRd \na u^{2}\cdot\na p dx\\
&=-\frac{d}{dt}\IRd\frac{|\na p|^2}{2}dx - \IRd |(-\Delta)^{s/2}\na p |^2 dx ,
\end{align*}
%From the equation for $p$ we deduce
%$$ \pa_t \na p = -(-\Delta)^s \na p + \na u^2 . $$
after testing the equation for $p$ against $\Delta p$.
% we obtain
%\begin{align*}
%\IRd \na u^{2}\cdot\na p dx = \frac{d}{dt}\IRd\frac{|\na p|^2}{2}dx + \IRd |(-\Delta)^{s/2}\na p |^2 dx ,
%\end{align*}
This leads to 
\begin{equation}\label{entropy.diss}
\frac{d}{dt}H[u,p] + \IRd |(-\Delta)^{s/2}\na p |^2 dx = 0,\qquad t>0.%\label{energy.eq}
\end{equation}
The key (and new!)  observation that leads to the proof of decay is that the expression $\int_0^T \int_{\mathbb{R}^3} \na (u^2)\cdot \na p \;dxdt$ defines a scalar product $\cA(\cdot,\cdot)$, namely
\begin{align*}
      \int_0^T \int_{\mathbb{R}^3} \na (u^2)\cdot \na p dxdt = \cA(\na (u^2), \na (u^2)),
\end{align*}
and any sequence that is Cauchy in the $\cA$-norm converges almost everywhere. Moreover by writing $\cA(\na (u^2), \na (u^2))$ in terms of Fourier transform we get an improved bound  $\|u\|^2_{L^2(0,T,L^2(\mathbb{R}^3)}\le T^{\alpha}$ with $\alpha<1$. This  combined with a sharper estimate for $\|\nabla p\|^2_{L^2(0,T,L^2(\mathbb{R}^3)}$ yields our algebraic decay. \\

Our second main theorem concerns a weak-strong uniqueness result:
\begin{theo}[Weak-strong uniqueness, continuous dependence on data]
\label{thr.ws}
Let $\frac{3}{4}\leq s\leq 1$. Assume that $v$ is a strong solution to 
\begin{align*}
 \pa_t v = \Div(v\nabla q),\quad 
 \pa_t q + (-\Delta)^s q = v^2\quad &\mbox{in }\R^3\times (0,\infty),\\
 v(0) = v_0\quad &\mbox{in }\R^3 ,
\end{align*}
such that 
$$\exists\nu>0 : ~~
\nabla v \in L^\infty(0,\infty; L^{\frac{12}{3+2s} + \nu}(\R^3)),\quad 
\sup_{\R^3\times (0,\infty)}\Delta q < \infty .
$$
Then there exists a constant $K>0$ such that, for any $u$ weak solution to
\eqref{1L} according to Theorem 1 in \cite{CGZ18}:
$$
H[(u(t),p(t))\vert (v(t),q(t))]\leq e^{Kt}H[(u_0,p_0)\vert (v_0,q_0)],\qquad t>0,
$$
where $H[(u,p)\vert (v,q)]$ denotes the relative entropy between $u$ and $v$:
\begin{align*}
 H[(u,p)\vert (v,q)] = \IRd\left( (u-v)^2 + \frac{1}{2}|\nabla (p-q)|^2 \right)dx .
\end{align*}
In particular $u\equiv v$ if $u_0\equiv v_0$. This means that if there exists a strong solution, then any weak solution with the same initial data coincides with it.
\end{theo}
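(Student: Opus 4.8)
The plan is to derive a Gronwall-type differential inequality for the relative entropy $H[(u,p)\vert(v,q)]$ along a pair consisting of a weak solution $(u,p)$ and a strong solution $(v,q)$. First I would compute, at least formally, the time derivative of each of the two pieces. For the density part, using $\pa_t(u-v) = \Div(u\na p - v\na q)$ and testing against $2(u-v)$, one obtains
\begin{align*}
\frac{d}{dt}\IRd (u-v)^2\,dx = -2\IRd \na(u-v)\cdot(u\na p - v\na q)\,dx.
\end{align*}
I would split the flux difference in the usual relative-entropy way, $u\na p - v\na q = (u-v)\na q + u\na(p-q)$ (or the symmetric variant $= u\na(p-q) + (u-v)\na p$, choosing whichever makes the regularity accounting work), so that the right-hand side becomes a sum of a "good" quadratic term and cross terms to be absorbed. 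For the pressure part, subtracting the two equations for $p$ and $q$ and testing against $-\Delta(p-q)$ gives
\begin{align*}
\frac{d}{dt}\IRd \tfrac12|\na(p-q)|^2\,dx + \IRd |(-\Delta)^{s/2}\na(p-q)|^2\,dx = \IRd \na(u^2-v^2)\cdot\na(p-q)\,dx,
\end{align*}
and $\na(u^2-v^2) = \na((u-v)(u+v))$ is again expanded to expose the dissipative structure. Adding the two identities, the leading cross terms involving $u\na(p-q)$ should cancel, leaving an inequality of the form $\frac{d}{dt}H[(u,p)\vert(v,q)] + (\text{dissipation}) \le (\text{remainder})$.

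Next I would estimate the remainder terms by quantities controlled by $H[(u,p)\vert(v,q)]$ times a time-dependent constant. This is where the hypotheses on the strong solution enter: $\sup \Delta q < \infty$ controls terms like $\IRd (u-v)^2 \Delta q\,dx$ by $C\,H$, while the bound $\na v \in L^\infty(0,\infty;L^{12/(3+2s)+\nu})$ is exactly what is needed to handle the term $\IRd \na v \cdot (u-v)\na(p-q)\,dx$ (and similar ones) via Hölder with three exponents: one factor in $L^{12/(3+2s)+\nu}$ for $\na v$, one factor $(u-v) \in L^2$, and one factor $\na(p-q)$ estimated in $L^{r}$ for some $r$ slightly above the critical Sobolev exponent $6/(3-2s) = 2^*_s$, which is available by interpolating the $L^2$-norm of $\na(p-q)$ (from $H$) against the $\dot H^s$-bound on $\na(p-q)$ (from the dissipation term $\|(-\Delta)^{s/2}\na(p-q)\|_{L^2}^2$). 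The exponent $12/(3+2s)$ is precisely the conjugate that makes $\tfrac12 + \tfrac{3+2s}{12} + \tfrac{3-2s}{12}\le 1$ hold with a little room, so the small $\nu>0$ buys the strict inequality needed to absorb the dissipative piece into the left-hand side (a Young-inequality splitting, absorbing the $\dot H^s$ part and leaving $C\,H$). After this absorption one arrives at $\frac{d}{dt}H \le K\,H$ with $K = K(\|\na v\|_{L^\infty L^{12/(3+2s)+\nu}}, \sup\Delta q)$, and Gronwall yields $H[(u(t),p(t))\vert(v(t),q(t))] \le e^{Kt} H[(u_0,p_0)\vert(v_0,q_0)]$; taking $u_0\equiv v_0$ (hence $p_0 \equiv q_0$ by the shared construction of the pressure from the data) forces $H\equiv 0$, i.e. $u\equiv v$.

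The main obstacle I anticipate is not the algebra of the relative-entropy identity but its rigorous justification: the weak solution $(u,p)$ from Theorem 1 of \cite{CGZ18} need not have enough regularity to be used directly as a test function against itself or against $(v,q)$, and the quantity $\na(u^2)$ appearing in the energy identity only makes sense in the weak/renormalized sense. I would therefore work with the regularized/approximate solutions used in the existence proof (or a suitable mollification), derive the inequality at that level where all integrations by parts are licit, and pass to the limit, using the weak lower semicontinuity of the dissipation and of the relative entropy to keep the correct direction of the inequality; the cross-term cancellation must be arranged so that only terms stable under these limits survive. A secondary technical point is checking that the interpolation/Sobolev embedding used for $\na(p-q)$ is valid for the whole range $3/4\le s\le 1$ in $\R^3$ — at $s=1$ the operator is local and the argument degenerates to the classical one, while the constraint $s\ge 3/4$ should be what guarantees $2^*_s = 6/(3-2s) \ge 4$, keeping the Hölder exponents admissible and $12/(3+2s)$ finite and below any problematic threshold.
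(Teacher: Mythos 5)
Your overall strategy coincides with the paper's: compute $\frac{d}{dt}H[(u,p)\vert(v,q)]$ from the two equations, expose the cancellation of $\pm\int\nabla(u^2-v^2)\cdot\nabla(p-q)\,dx$ against the source term of the pressure equation, reduce to the two remainder terms $\int(u-v)^2\Delta q\,dx$ and $2\int(u-v)\nabla v\cdot\nabla(p-q)\,dx$, estimate the first by $\sup\Delta q$, estimate the second by H\"older and interpolation while absorbing the fractional dissipation, and conclude by Gronwall. Your remark that the identity needs to be justified through the approximating solutions from the existence construction is also a legitimate concern, though the paper does not dwell on it.

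Where you go wrong is the exponent bookkeeping for the second remainder term, and the error is not cosmetic. You propose a three-factor H\"older with $\nabla v\in L^{\frac{12}{3+2s}+\nu}$, $(u-v)\in L^2$, $\nabla(p-q)\in L^r$; conjugacy then forces $\tfrac{1}{r}=\tfrac12-\tfrac{3+2s}{12}=\tfrac{3-2s}{12}$, i.e.\ $r=\tfrac{12}{3-2s}$, which is \emph{twice} the Sobolev exponent $\tfrac{6}{3-2s}$ available from interpolating $\|\nabla(p-q)\|_{L^2}$ against $\|(-\Delta)^{s/2}\nabla(p-q)\|_{L^2}$. So the $L^r$ bound you invoke ("slightly above $6/(3-2s)$, available by interpolating") does not exist, and your sum $\tfrac12+\tfrac{3+2s}{12}+\tfrac{3-2s}{12}$ equals exactly $1$ with no "room"; the small $\nu>0$ does not buy what you need. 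The paper avoids stating the three-factor inequality: it first applies Young's inequality $2ab\le a^2+b^2$ to split the term into $\int(u-v)^2\,dx$ (controlled directly by $H$, with no H\"older needed) and $\int|\nabla v|^2|\nabla(p-q)|^2\,dx$, and then runs a \emph{two}-factor H\"older on the latter, interpolating $\|\nabla(p-q)\|_{L^{2\lambda}}$ between $L^2$ and $L^{6/(3-2s)}$ and absorbing the $\dot H^s$ part with a second Young step. This Young-first step is what allows the subsequent interpolation exponent $\rho\in(0,1)$ to be chosen with genuine slack, and it is the structural device you are missing; with a direct three-factor H\"older and $(u-v)$ fixed in $L^2$, the constraint on $\nabla v$ cannot be relaxed in the way you attempt.
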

The weak-strong uniqueness is a familiar concept in the field of fluid-dynamic equations and conservation laws \cite{L34, P59, S62, D10}. It  is not a uniqueness result in the standard form: it states in fact that if strong solutions exist (still an open question for \eqref{1L}), then they are unique even when compared to all weak solutions. As in the case of fluid-dynamic equations, our notion of weak solution includes an energy inequality and such energy inequality is fundamental for the proof of Theorem \ref{thr.ws} (for the case of Navier-Stokes equation Scheffer and Shnirelman gave a  counterexample to weak-strong uniqueness if bounds for the energy functional are removed). \\

Before stating our last result we recall for completeness the existence theorem for weak solutions proven in \cite{CGZ18}:

\begin{theo} \label{Prior_thm} (Theorem 1 in \cite{CGZ18}) Let $3/4 \leq s\leq 1$ and $u_{0}, p_{0} : \R^3 \to (0,+\infty)$ be functions  such that $u_{0}, p_{0}\in L^1(\R^3)$,  
$ \int_{\R^3} u^2_{0} + |\nabla  p_{0}|^2 \;dx <+\infty$ and $ \int_{\R^3} u_{0}\gamma(x) \;dx<  +\infty$, with $\gamma(x) := \sqrt{1+|x|^2}$. 
Let $q>3/s.$ There exist functions $u, p: \R^3 \times [0,{\infty)} \to [0,+\infty) $ such that for every $T>0$ 
\begin{align*}
u\in L^\infty(0,T,L^1\cap L^2(\R^3)), &\quad p\in L^\infty(0,T,H^{1}\cap L^1(\R^3)), \\
p\in  L^2(0,T,H^{s+1}(\R^3)), & \quad {\sup_{[0,T]}}\int_{\R^3} u\gamma \;dx < +\infty, \\
\pa_t u  \in L^2(0,T,(W^{1,q}(\R^3))'), & \quad  \pa_t p\in L^2(0,T,(L^2\cap L^4(\R^3))'),
\end{align*}
 which satisfy the following weak formulation of (\ref{1L}): %for all $ \psi \in L^3(0,T,H^1(\R^3))$ and $\phi \in \red{L^{3/2}(0,T,W^{1,6}(\R^3))}$ we have 
\begin{align*}%\label{mar15.eq.1}
& \int_0^T  \langle \pa_t u, \phi\rangle dt 
+\int_0^T\int_{\R^{3}}u\nabla p\cdot\nabla\phi\, dx dt = 0\quad \forall \phi \in {L^2(0,T; W^{1,q}(\R^3))}, \\ 
%&\nonumber\qquad\forall \phi\in L^2(0,T; H^1(\R^3)) ,\\
%\label{mar15.eq.2}
&\int_0^T  \langle \pa_t p, \psi\rangle  dt  + \int_0^T\int_{\R^3}( (-\Delta)^{s} p - {u^2}) \psi dx dt= 0
\quad\forall \psi \in {L^2(0,T; L^2\cap L^4(\R^3))},\\
&\lim_{t\to 0} u(t) = u_{0}\quad\textrm{in} \; W^{1,q}(\R^3)',\quad  \lim_{t\to 0} p(t) = p_{0} \quad\textrm{in}\; (L^2\cap L^4(\R^3))'  .
\end{align*}
\end{theo}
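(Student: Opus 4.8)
The plan is to realise the weak solution as the limit of a multi-parameter approximation whose only genuinely robust a~priori bound is the energy (Lyapunov) estimate for $H[u,p]$; everything else is conserved mass, the weighted first moment, or information read directly off the two equations. Concretely I would fix a viscosity $\eps>0$, a truncation $M>0$ and a radius $R>0$, and solve on $B_R$ --- with no-flux conditions and a spectral realisation of the operators (a Galerkin truncation of the pressure equation, or an extra $\delta\Delta p$ with $\delta\to0$ first, is what makes $\Delta p$ a legitimate test function when $s<1$) --- the regularised, non-degenerate system
\begin{align*}
\pa_t u = \Div(u\na p) + \eps\Delta u,\qquad \pa_t p = -(-\Delta)^s p + T_M(u)^2,\qquad T_M(r):=\min(r_+,M).
\end{align*}
Existence of the approximate system follows from a Leray--Schauder fixed point: given $\bar u$, the $p$-equation is linear and solved by semigroup/parabolic theory so that $\na p$ has enough integrability for the linear drift--diffusion theory (indeed $\na p\in L^2_tH^s\hookrightarrow L^2_tL^{6/(3-2s)}_x$ with $6/(3-2s)>3$ since $s>1/2$), after which the $u$-equation is a linear non-degenerate drift--diffusion equation with smooth coefficients; the bounds that close the fixed-point map come from the next step. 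A semi-implicit time discretisation would do equally well.

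For the uniform estimates: nonnegativity follows by testing the $u$-equation by $u_-$ (so $u\ge0$) and, since $p_0\ge0$ and $T_M(u)^2\ge0$, from the maximum principle for the pressure (so $p\ge0$); mass conservation $\|u\|_{L^\infty_tL^1}\le\|u_0\|_{L^1}$ follows by testing by $1$, and $\|p(t)\|_{L^1}\le\|p_0\|_{L^1}+Ct$ using $\int(-\Delta)^sp=0$; the energy estimate follows by testing the $u$-equation by $2u$ and the $p$-equation by $-\Delta p$ and adding, so that the two copies of the cross term $\int\na(u^2)\cdot\na p$ cancel and
\begin{align*}
\frac{d}{dt}H[u,p] + \IRd|(-\Delta)^{s/2}\na p|^2\,dx + 2\eps\IRd|\na u|^2\,dx = 0,
\end{align*}
giving $u\in L^\infty_tL^2$, $\na p\in L^\infty_tL^2$, $p\in L^2_tH^{s+1}$ uniformly in $(M,R)$ and $\sqrt\eps\,\na u\in L^2_{t,x}$ (uniform for fixed $\eps$ only); tightness follows by testing the $u$-equation by $\gamma(x)=\sqrt{1+|x|^2}$ and using $|\na\gamma|\le1$ with Cauchy--Schwarz, $\sup_{[0,T]}\int u\gamma\le\int u_0\gamma+Ct$. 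Finally, since $q>3/s\ge3$ forces $W^{1,q}\hookrightarrow L^\infty$ and $u\na p\in L^2_tL^{3/(3-s)}_x\cap L^\infty_tL^1_x$ (from $u\in L^\infty_tL^2$ and $\na p\in L^2_tL^{6/(3-2s)}_x$), one reads off $\pa_t u\in L^2_t(W^{1,q})'$ uniformly, and $\pa_t p\in L^2_t(L^2\cap L^4)'$ ($(-\Delta)^sp\in L^2_tL^2_x$ since $s\le1$; the $u^2$ contribution is uniform once the improved integrability of $u$ below is in hand).

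It then remains to pass to the limit, peeling parameters off from the inside out. For $(\eps,M,R)$ fixed, Aubin--Lions --- via $u\in L^2_tH^1$, $\pa_t u\in L^2_t(W^{1,q})'$ and $p\in L^2_tH^{s+1}$, $\pa_t p\in L^2_t(L^2\cap L^4)'$ --- gives strong convergence of $u$ and of $\na p$ in $L^2_{t,x}(B_R)$, enough to pass to the limit in every term; then $M\to\infty$ (the uniform $L^\infty_tL^2$ bound gives $T_M(u)^2\to u^2$), then $R\to\infty$ (weighted bound plus a diagonal extraction for tightness), leaving a weak solution of the viscous problem on $\R^3$.

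The genuinely hard step is the vanishing-viscosity limit $\eps\to0$, where the bound $\sqrt\eps\,\na u\in L^2_{t,x}$ degenerates, so spatial compactness of $u$ can no longer be borrowed from the regularisation. The pressure stays under control --- $\na p\in L^\infty_tL^2\cap L^2_tH^s$ with $\pa_t\na p$ bounded in a negative-order Sobolev space gives $\na p\to\na p$ strongly in $L^2_tL^2_{\mathrm{loc}}$ --- but to upgrade $u_\eps\rightharpoonup u$ (weak-$*$ in $L^\infty_tL^2$) to a.e. convergence one must exploit the structure of the system. The dissipation $\int_0^T\!\!\int\na(u_\eps^2)\cdot\na p_\eps\,dx\,dt$ is uniformly bounded --- it equals $\|u_0\|_{L^2}^2-\|u_\eps(T)\|_{L^2}^2-2\eps\|\na u_\eps\|_{L^2_{t,x}}^2$ --- and this is precisely the bilinear quantity that, as flagged in the introduction, defines the scalar product $\cA(\na(u^2),\na(u^2))$, whose norm forces Cauchy sequences to converge a.e.; equivalently one may run a DiPerna--Lions renormalisation for the (now pure) transport equation for $u$ against the strongly convergent field $\na p$. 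One also has to extract an $\eps$-uniform improvement of the $L^\infty_tL^2$ bound on $u$ --- an $L^4_tL^{8/3}_x$-type estimate --- without which $u^2$ cannot legitimately be tested against $L^2\cap L^4$; producing this together with the a.e. convergence is the technical heart of the argument, and is where the restriction $s\ge3/4$ really bites, being needed to keep the fractional regularity of $p$, and hence the integrability of $u\na p$ and of $u^2$, strong enough on all of $\R^3$ (on $\T^3$, where Poincaré inequalities and constants are at hand, the weaker condition $s>1/2$ suffices --- this is Theorem~\ref{cor.ex}). Once a.e. convergence of $u_\eps$ is secured, $u_\eps^2\to u^2$ and $u_\eps\na p_\eps\to u\na p$ in $L^1_{\mathrm{loc}}$, the weak formulation passes to the limit, and lower semicontinuity converts the energy identity into the energy inequality built into the notion of weak solution.
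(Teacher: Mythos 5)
Since Theorem~\ref{Prior_thm} is quoted from \cite{CGZ18} rather than proved in this manuscript, your proposal can only be compared against the sketch the present paper gives of that earlier proof; by that yardstick you have reproduced the essential architecture. The regularize--estimate--compactify template (viscosity plus truncation plus domain truncation, a Leray--Schauder fixed point, energy identity for $H[u,p]$, mass and first-moment bounds for tightness, Aubin--Lions for the regularized system) is the standard scaffolding, and more importantly you correctly identify the two genuinely nontrivial ingredients that this paper explicitly attributes to \cite{CGZ18}: first, that the dissipation $\int_0^T\!\!\int_{\R^3}\na(u^2)\cdot\na p\,dx\,dt$ defines the positive bilinear form $\cA$ whose Cauchy sequences converge a.e., which is what rescues spatial compactness for $u$ once the artificial viscosity is gone; and second, that the restriction $s\geq 3/4$ enters through an improved $L^p$ estimate on the approximating $u$ needed to identify the limit of $u^2$ and to make $u^2$ a legitimate element of $L^2_t(L^2\cap L^4)'$. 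The latter point you present as an $L^4_tL^{8/3}_x$ bound, whereas the paper records it as $u^{(\rho)}$ bounded in $L^3(\R^3\times(0,T))$ (feeding weak $L^{3/2}$-limits of $(u^{(\rho)})^2$); these are the same estimate up to interpolation with $L^\infty_tL^2_x$, so the difference is purely presentational.

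The one concrete misstep is the aside that a DiPerna--Lions renormalization for the transport equation $\pa_t u=\Div(u\na p)$ is an ``equivalent'' route to a.e.\ convergence of $u_\eps$. It is not available here: renormalized-solution theory (and Ambrosio's BV extension) requires the divergence of the drift, $\Delta p$, to lie in $L^1_{\mathrm{loc}}$ (or in $L^\infty$, or in BV), whereas $p\in L^2(0,T;H^{s+1}(\R^3))$ with $s<1$ only yields $\Delta p\in L^2(0,T;H^{s-1}(\R^3))$, a negative-order Sobolev space. So for $s<1$ the field $\na p$ falls outside the scope of that theory, and the $\cA$-compactness argument is not a convenience but a necessity. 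You should delete or qualify that sentence; the remainder of the $\eps\to0$ discussion, which leans on the $\cA$-norm and the uniform $L^3$-type bound, is the correct mechanism.
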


Here is our extension of Theorem \ref{Prior_thm} to the torus case: 

\begin{theo}[Existence of solutions, torus case]\label{cor.ex}
Same assumptions as in Theorem \ref{Prior_thm}, with the exception that $1/2<s\leq 1$. Then there exists $u : \T^3\times [0,\infty)\to [0,\infty)$
weak solution to \eqref{1L} with $\R^3$ replaced by $\T^3$.
\end{theo}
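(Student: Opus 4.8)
The plan is to run the approximation scheme of \cite{CGZ18} on the torus, and observe that the only place where the restriction $s\geq 3/4$ was used in $\R^3$ can be relaxed to $s>1/2$ once the domain is compact. First I would recall the structure of that proof: one regularizes \eqref{1L} by adding artificial diffusion $\eps\Delta u$ (or a higher-order operator) to the $u$-equation, possibly truncating $u^2$, and solving the resulting quasilinear parabolic system by a fixed-point argument on a time-discretized or Galerkin level. The a priori estimates come from the energy identity \eqref{entropy.diss}, which holds verbatim on $\T^3$ by the same formal computation (integrating by parts over $\T^3$ produces no boundary terms), giving
\begin{equation*}
\sup_{[0,T]}\IRd\Big(u^2 + \tfrac12|\na p|^2\Big)dx + \int_0^T\IRd|(-\Delta)^{s/2}\na p|^2\,dx\,dt \leq \IRd\Big(u_0^2 + \tfrac12|\na p_0|^2\Big)dx,
\end{equation*}
together with the mass conservation $\int_{\T^3}u(t)\,dx = \int_{\T^3}u_0\,dx$ obtained by testing the first equation with $\phi\equiv 1$, and nonnegativity of $u$ and $p$ via the comparison structure of the regularized equations. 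The weighted estimate $\int u_0\gamma\,dx<\infty$ from Theorem \ref{Prior_thm} becomes vacuous on $\T^3$ since $\gamma$ is bounded, which is a simplification rather than an obstacle.

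The crux is the compactness argument needed to pass to the limit in the nonlinear terms $u\na p$ and $u^2$. From the energy bound we have $p\in L^2(0,T;H^{s+1}(\T^3))\cap L^\infty(0,T;H^1(\T^3))$ and $u\in L^\infty(0,T;L^2(\T^3))$. The key point is that on the torus $H^{s+1}\hookrightarrow W^{1,r}$ with $r = \frac{6}{3-2s}$ when $s<3/2$, so $\na p\in L^2(0,T;L^r(\T^3))$ with $r>3$ precisely when $s>1/2$; combined with $u\in L^\infty_tL^2_x$ this gives $u\na p\in L^2(0,T;L^q(\T^3))$ for some $q>1$, and hence $\pa_t u$ is bounded in $L^2(0,T;(W^{1,q'}(\T^3))')$. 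One then gets strong convergence $u^{(\eps)}\to u$ in, say, $C([0,T];(W^{1,q'})')$ by Aubin–Lions–Simon, which upgrades (interpolating with the $L^\infty_tL^2_x$ bound) to strong convergence in $L^2(0,T;L^2(\T^3))$, enough to pass to the limit in $u^2$; strong convergence of $\na p^{(\eps)}$ in $L^2(0,T;L^2(\T^3))$ follows from $\pa_t p$ bounded in $L^2(0,T;(H^{s-1})')$ or similar plus the $H^{s+1}$-regularity, again via Aubin–Lions. The $\eps\Delta u$ term vanishes in the limit because $\eps\na u^{(\eps)}$ is bounded in $L^2$ by the regularized energy estimate (which contributes $\eps\int|\na u|^2$).

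I expect the main obstacle to be exactly the borderline integrability in the drift term: one needs $\na p$ to have \emph{strictly} better than $L^3$ spatial integrability (uniformly in time after the $L^2_t$ integration, or in an $L^p_tL^q_x$ scale) so that the product $u\na p$ lands in a space whose dual controls $\pa_t u$ and yields genuine compactness, and this is the single inequality that forces $s>1/2$ rather than $s\geq 3/4$ — on $\R^3$ the analogous step in \cite{CGZ18} apparently needed more room because of the lack of compact embeddings and the need to also control behavior at infinity (hence the weighted estimates), whereas on $\T^3$ the embedding $H^{s+1}(\T^3)\hookrightarrow W^{1,6/(3-2s)}(\T^3)$ is compact and sharp. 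A secondary technical point is ensuring the time-regularity estimate for $p$ is strong enough: from $\pa_t p = -(-\Delta)^s p + u^2$ with $u^2\in L^\infty(0,T;L^1)\cap L^2(0,T;L^{?})$ one should check $\pa_t p\in L^2(0,T;X')$ for a space $X$ making the Aubin–Lions lemma applicable to extract a.e. convergence of $\na p$; this is routine but must be stated carefully. Once these compactness facts are in place, passing to the limit in the weak formulation is standard, and one concludes exactly as in the proof of Theorem \ref{Prior_thm}.
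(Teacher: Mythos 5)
The proposal identifies a genuine simplification on the torus (compact Sobolev embeddings) but then runs into a gap at exactly the point where the original existence proof needed a nontrivial idea. The problematic step is the claim that strong convergence of $u^{(\eps)}$ in $L^2(0,T;L^2(\T^3))$ follows from Aubin--Lions--Simon ``interpolating with the $L^\infty_t L^2_x$ bound.'' This does not work. The only uniform spatial bound available for $u^{(\eps)}$ in the $\eps\to 0$ limit is $L^\infty(0,T;L^2(\T^3))$ (the $\eps\int|\nabla u^{(\eps)}|^2$ term degenerates), so Aubin--Lions gives precompactness only in $L^q(0,T;X)$ for spaces $X$ into which $L^2(\T^3)$ embeds \emph{compactly}, i.e.\ spaces strictly weaker than $L^2$ (such as $H^{-\delta}$, $\delta>0$). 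Interpolating between $L^\infty_t L^2_x$ (a uniform bound, not convergence) and convergence in a weaker space only yields convergence in intermediate spaces, all of which remain strictly weaker than $L^2_x$. You therefore obtain only weak $L^2$ convergence of $u^{(\eps)}$, which suffices to pass to the limit in the drift term $u^{(\eps)}\nabla p^{(\eps)}$ (paired against strong convergence of $\nabla p^{(\eps)}$), but \emph{not} in the quadratic source $ (u^{(\eps)})^2$.

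The paper's actual proof addresses precisely this difficulty. It recalls that the restriction $s\geq 3/4$ in \cite{CGZ18} was used only to bound $u^{(\rho)}$ in $L^3(\R^3\times(0,T))$, which in turn was needed to show that the cross term
$\int_0^T\int((u^{(\rho)})^2 - v)(p^{(\rho)}-p)\,dx\,dt$
vanishes, i.e.\ that $(u^{(\rho)})^2$ is Cauchy in the $\mathcal A$-norm. On $\T^3$ with $s>1/2$ the compact embedding $H^{s+1}(\T^3)\hookrightarrow L^\infty(\T^3)$ combined with strong $L^1$ convergence of $p^{(\rho)}$ gives $p^{(\rho)}$ Cauchy in $L^1(0,T;L^\infty(\T^3))$; pairing against the $L^\infty_t L^2_x$ bound on $u^{(\rho)}$ then shows $(u^{(\rho)})^2$ is $\mathcal A$-Cauchy, and the a.e.\ convergence of $u^{(\rho)}$ --- and hence the identification $v=u^2$ --- follows from the special structure of $\mathcal A$ exactly as in \cite{CGZ18}. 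Your proposal omits this $\mathcal A$-Cauchy mechanism entirely and replaces it with a compactness claim that cannot be justified from the available a priori bounds. The embedding you invoke, $H^{s+1}\hookrightarrow W^{1,6/(3-2s)}$ with exponent $>3$ for $s>1/2$, while true, is not the embedding the paper actually uses: the relevant one is $H^{s+1}(\T^3)\hookrightarrow L^\infty(\T^3)$, needed to make $p^{(\rho)}$ converge strongly in $L^1_t L^\infty_x$.
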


The rest of the manuscript is divided into three sections: Section \ref{sec:thm1} contains proof of Theorem \ref{thr.ltb}, Section \ref{sec:thm2} the one of Theorem \ref{thr.ws} and Section \ref{sec:thm4} the proof of Theorem \ref{cor.ex}.

\section{Proof of Theorem \ref{thr.ltb}}\label{sec:thm1}

We first show the following auxiliary result.
% \begin{lemma}\label{lem.p1}
%  
% \end{lemma}
% 
%
\begin{lemma}\label{lem.xH}
Under the same assumptions of Theorem ~\ref{thr.ltb} there exists a constant $C>0$ such that 
$$
\sup_{t\in [0,T]}\IRd |x|\left( u^2(x,t) + \frac{1}{2}|\nabla p(x,t)|^2 \right)dx \leq C T^{1/2} ,\qquad T>1.
$$
\end{lemma}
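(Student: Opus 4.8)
The plan is to estimate the smooth weighted functional
\[
N(t):=\IRd\gamma(x)\Bigl(u^2+\tfrac12|\na p|^2\Bigr)dx,\qquad \gamma(x):=\sqrt{1+|x|^2},
\]
which is comparable to $|x|$ at infinity and satisfies $|\na\gamma|\le1$, $\|D^2\gamma\|_{L^\infty}\le1$, and $\|(-\Delta)^s\gamma\|_{L^\infty}<\infty$ — this last fact requires $s>1/2$, since $(-\Delta)^s\gamma(x)$ is an absolutely convergent integral only when $2+2s>3$, and then it decays like $|x|^{-1-2s}$ at infinity. Because the energy identity \eqref{entropy.diss} gives $\IRd(u^2+\tfrac12|\na p|^2)dx\le H[u_{0},p_{0}]$ for every $t$, the behaviour of $\gamma$ near the origin is irrelevant and it suffices to bound $N$; the hypotheses of Theorem~\ref{thr.ltb} (with those of Theorem~\ref{Prior_thm}) ensure $N(0)<\infty$. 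All computations are performed first on the regularized solutions of \cite{CGZ18} and then passed to the limit.

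Differentiating $N$ and inserting $\pa_t u=\Div(u\na p)$ and $\pa_t\na p=\na(u^2)-(-\Delta)^s\na p$, the mixed terms $\mp\IRd\gamma\,\na(u^2)\cdot\na p\,dx$ cancel exactly as in the derivation of \eqref{entropy.diss}, leaving
\[
\frac{d}{dt}N(t)=-2\IRd u^2\,\na\gamma\cdot\na p\,dx-\IRd\gamma\,\na p\cdot(-\Delta)^s\na p\,dx=:I_1+I_2 .
\]
For $I_2$ I would use the singular-integral representation of $(-\Delta)^s$ and symmetrize in $x,y$: with $v=\na p$ this writes $I_2$ as the manifestly nonpositive principal term $-\tfrac{c_s}{4}\iint\frac{(\gamma(x)+\gamma(y))|v(x)-v(y)|^2}{|x-y|^{3+2s}}dx\,dy\le-c\|(-\Delta)^{s/2}\na p\|_{L^2}^2$ (using $\gamma\ge1$) plus a commutator-type remainder $-\tfrac12\IRd[(-\Delta)^s\gamma]\,|\na p|^2dx$, which by the bound on $(-\Delta)^s\gamma$ is $\le C\|\na p\|_{L^2}^2\le CH[u_{0},p_{0}]$. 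For $I_1$, using $|\na\gamma|\le1$, Hölder, and $\dot H^s(\R^3)\hookrightarrow L^{6/(3-2s)}$,
\[
|I_1|\le 2\IRd u^2|\na p|\,dx\le C\,\|u\|_{L^{12/(3+2s)}}^2\,\|(-\Delta)^{s/2}\na p\|_{L^2};
\]
since $2<\tfrac{12}{3+2s}<3$ for $3/4\le s<1$, the factor $\|u\|_{L^{12/(3+2s)}}$ is controlled by interpolating the energy bound $u\in L^\infty(0,T;L^1\cap L^2)$ against the extra integrability of $u$ extracted from the relation $u^2=\pa_t p+(-\Delta)^s p$ and the $p$-regularity in Theorem~\ref{Prior_thm}.

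Finally I would integrate in time on $(0,t)$, $t\le T$: the terms carrying a factor $\|(-\Delta)^{s/2}\na p\|_{L^2}$ are absorbed by Cauchy–Schwarz in $t$ together with the uniform-in-$T$ dissipation bound $\int_0^\infty\|(-\Delta)^{s/2}\na p\|_{L^2}^2dt\le H[u_{0},p_{0}]$, which is precisely what produces the exponent $1/2$; the nonpositive principal part of $I_2$ is discarded (or used to absorb lower-order contributions); and $N(0)$ is a fixed constant. Together with $|x|\le\gamma$ this yields $\sup_{t\in[0,T]}N(t)\le CT^{1/2}$ for $T>1$.

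The step I expect to be the main obstacle is keeping the time growth of \emph{every} term down to $T^{1/2}$: the transport term $I_1$ forces one to produce an $L^q$-in-space bound for $u$ with $q$ strictly above $2$ — this lies beyond the basic energy estimates and has to be recovered from the $p$-equation — and the weighted fractional term $I_2$ must be organised, through the interplay of the energy dissipation, the linear growth $\gamma\sim|x|$, and a weighted interpolation (Nash-type) inequality, so that its non-dissipative part does not grow faster than $\sqrt T$.
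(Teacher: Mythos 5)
Your overall framework is sound and close to the paper's: both compute the time derivative of the weighted energy and split it into a transport contribution $I_1$ and a weighted fractional dissipation $I_2$, and both recognize that the hard part is controlling the growth in $T$ to $T^{1/2}$. You honestly flag these as "the main obstacle," and they are; but as written the proposal has two genuine gaps, and one factual slip, that leave the theorem unproved.

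First, the commutator remainder from $I_2$. Your symmetrization is correct and yields the remainder $-\tfrac12\IRd\bigl[(-\Delta)^s\gamma\bigr]|\na p|^2\,dx$, but bounding this by $C\|\na p\|_{L^2}^2\leq CH[u_0,p_0]$ and then integrating in time gives $O(T)$, not $O(T^{1/2})$, so the final conclusion $N(T)\lesssim T^{1/2}$ does not follow. Also, the claimed decay $(-\Delta)^s\gamma(x)\sim|x|^{-1-2s}$ is wrong: since $\gamma\sim|x|$ and $D^2\gamma\sim|x|^{-1}$ at infinity, one has $(-\Delta)^s\gamma(x)\sim|x|^{1-2s}$, i.e.\ a much slower decay that is only barely integrable when raised to a suitable power. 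To recover $T^{1/2}$ you must exploit this weak decay rather than the $L^\infty$ bound: apply H\"older against $\|\na p\|_{L^\kappa}$ for some $\kappa$ slightly above $3/(2-s)$, and then Gagliardo--Nirenberg $\|\na p\|_{L^\kappa}\lesssim\|p\|_{L^1}^{1-\eta}\|\na p\|_{L^{6/(3-2s)}}^{\eta}$ together with the dissipation. This requires a preliminary linear-in-$T$ bound on $\|p\|_{L^1}$ and a careful check that $3(1-\eta)<1/2$, which only holds for $s\geq 3/4$. The paper accomplishes the same thing after commuting $|x|$ through $(-\Delta)^{s/2}$, splitting the commutator kernel $(|x|-|y|)/|x-y|^{3+s}$ into near and far pieces $f_1,f_2$, and estimating the convolutions $f_i\ast\na p$; the far piece $f_2$ is exactly where the $\|p\|_{L^1}\lesssim T$ bound and the GN argument enter, and the outcome is the same $T^{1/2}$. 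Your route via $(-\Delta)^s\gamma$ can be made to work, but the analysis needed is morally identical and you have not carried it out.

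Second, the transport term $I_1$. To get $\int_0^T|I_1|\,dt\lesssim T^{1/2}$ via Cauchy--Schwarz you need $\int_0^T\|u\|_{L^{12/(3+2s)}}^4\,dt\lesssim T$. Since $12/(3+2s)>2$, this is not given by the energy; you correctly note it must come from the $p$-equation but do not derive it. The paper supplies exactly the two missing estimates: Step~1 gives $\|p(T)\|_{L^2}\lesssim T$ (and $\|p(T)\|_{L^1}\lesssim T$), and Step~2 uses these, by testing the $p$-equation suitably, to obtain $\int_0^T\IRd u^3\,dx\,dt\lesssim T$. Interpolating $\|u\|_{L^{12/(3+2s)}}\leq\|u\|_{L^2}^{1-\omega}\|u\|_{L^3}^{\omega}$ with $\omega=(3-2s)/2$ and using H\"older in time, one checks that $6-4s\leq3$ (i.e.\ $s\geq3/4$) is precisely what makes $\int_0^T\|u\|_{L^{12/(3+2s)}}^4\,dt\lesssim T$. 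These preparatory bounds are not routine and cannot be waved away; they are the core of the proof. Until you establish (i) $\|p\|_{L^1},\|p\|_{L^2}\lesssim T$ and $\int_0^T\IRd u^3\lesssim T$ and (ii) a $T^{1/2}$ bound on the $(-\Delta)^s\gamma$ remainder using the true decay $|x|^{1-2s}$, the proof is incomplete.
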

\begin{proof}[Proof of Lemma \eqref{lem.xH}] The proof is divided into several steps.\medskip\\
{\em Step 1: bound for $\|p(t)\|_{L^2(\R^3)}$.}
By using $p$ as a test function in the second equation of \eqref{1L} we get
\begin{align}\label{est.p2.1}
 &\frac{1}{2}\|p(T)\|_{L^2(\R^3)}^2 - \frac{1}{2}\|p_0\|_{L^2(\R^3)}^2 + 
 \int_0^T\IRd |(-\Delta)^{s/2}p|^2 dx dt = \int_0^T\IRd u^2 p dx dt\\
 \nonumber
 &\leq \int_0^T\|p(t)\|_{L^\infty(\R^3)}\IRd u^2 dx dt\leq C \int_0^T\|p(t)\|_{L^\infty(\R^3)}dt,
\end{align}
since $u\in L^\infty(0,\infty; L^2(\R^3))$ thanks to the entropy inequality \eqref{entropy.diss}.
Moreover, given that $6/(3-2s)>3$, the following Gagliardo-Nirenberg inequality holds
\begin{align}\label{GN.pinf}
 \|p\|_{L^\infty(\R^3)} \leq C \|p\|_{L^1(\R^3)}^{1-\xi}\|\nabla p\|_{L^{6/(3-2s)}(\R^3)}^\xi ,
\end{align}
for some exponent $\xi\in (0,1)$. On the other hand, integrating the second equation in \eqref{1L} in $\R^3\times [0,T]$ yields
$$ \|p(T)\|_{L^1(\R^3)} - \|p_0\|_{L^1(\R^3)} = \int_0^T\IRd u^2 dx dt , $$
which implies (again thanks to the entropy inequality \eqref{entropy.diss})
\begin{align}
 \|p(T)\|_{L^1(\R^3)}\leq C T,\qquad T>1.\label{est.p1}
\end{align}
From \eqref{GN.pinf} and \eqref{est.p1} it follows
\begin{align*}
 \int_0^T\|p(t)\|_{L^\infty(\R^3)}dt &\leq C T^{1-\xi}\int_0^T\|\nabla p\|_{L^{6/(3-2s)}(\R^3)}^\xi dt.
\end{align*}
By applying H\"older's inequality we get
\begin{align*}
 \int_0^T\|p(t)\|_{L^\infty(\R^3)}dt &\leq C T^{1-\xi}\left(\int_0^T\|\nabla p\|_{L^{6/(3-2s)}(\R^3)}^2 dt\right)^{\xi/2}T^{1-\xi/2}.
\end{align*}
However, Sobolev's embedding $H^s(\R^3)\hookrightarrow L^{6/(3-2s)}(\R^3)$ and the entropy inequality \eqref{entropy.diss} 
allow us to write
\begin{align}\label{est.nap.Sob}
 \int_0^T\|\nabla p\|_{L^{6/(3-2s)}(\R^3)}^2 dt\leq C\int_0^T\|\nabla (-\Delta)^{s/2} p\|_{L^{2}(\R^3)}^2 dt\leq C ,
\end{align}
which implies
\begin{align*}
 \int_0^T\|p(t)\|_{L^\infty(\R^3)}dt &\leq C T^{2-3\xi/2}\leq C T^2, \qquad T>1.
\end{align*}
The above inequality and \eqref{est.p2.1} lead to
\begin{align}
 \|p(T)\|_{L^2(\R^3)}\leq C T,\qquad T>1.\label{est.p2}
\end{align}
{\em Second step: bound for $\int_0^T\IRd u^3 dx dt$.}
It was already shown in \cite{CGZ18} that $\int_0^T\IRd u^3 dx dt  \le C(T)$. Now we need a more accurate estimate on the generic constant  $C(T)$. For that consider
\begin{align}\label{est.u3.1}
 \int_0^T\IRd u^3 dx dt &= \IRd u(T)p(T) dx - \IRd u_0 p_0 dx + \int_0^T\IRd u|\nabla p|^2 dx dt \\
 &+ \int_0^T\IRd u (-\Delta)^s p\, dx dt. \nonumber
\end{align}
From \eqref{entropy.diss}, \eqref{est.p2} we get
\begin{align}\label{est.up}
 \IRd u(T)p(T) dx\leq \|u\|_{L^\infty(0,\infty; L^2(\R^3))}\|p(T)\|_{L^2(\R^3)}\leq C T, \qquad T>1.
\end{align}
H\"older inequality yields
\begin{align}\label{est.unap2.1}
 \int_0^T\IRd u|\nabla p|^2 dx dt \leq \int_0^T\left( \IRd u^{\frac{3}{2s}} dx\right)^{\frac{2s}{3}}
 \left( \IRd |\nabla p|^{\frac{6}{3-2s}}dx \right)^{1-\frac{2s}{3}}dt .
\end{align}
The mass conservation and \eqref{entropy.diss} yields $u\in L^\infty(0,\infty; L^1\cap L^2(\R^3))$;
given that $3/4\leq s < 1$, an interpolation argument implies $u\in L^\infty(0,\infty; L^{3/2s}(\R^3))$.
This relation, together with \eqref{est.nap.Sob} and \eqref{est.unap2.1}, yields
\begin{align}\label{est.unap2}
 \int_0^T\IRd u|\nabla p|^2 dx dt \leq C.
\end{align}
Let us apply H\"older inequality to
\begin{align}\label{est.uDeltap.1}
 \int_0^T\IRd u (-\Delta)^s p dx dt \leq C\left(\int_0^T\|u\|_{L^{\frac{6}{5-2s}}(\R^3)}^2 dt\right)^{\frac{1}{2}}
 \left(\int_0^T\|(-\Delta)^s p\|_{L^{\frac{6}{1+2s}}(\R^3)}^2 dt\right)^{\frac{1}{2}}.
\end{align}
Using Sobolev's embedding $H^{1+s}(\R^3)\hookrightarrow W^{2s,6/(1+2s)}(\R^3)$ and \eqref{est.nap.Sob} we can write
\begin{align}\label{uff.1}
 \int_0^T\|(-\Delta)^s p\|_{L^{6/(1+2s)}(\R^3)}^2 dt\leq C \int_0^T\|\nabla (-\Delta)^{s/2} p\|_{L^{2}(\R^3)}^2 dt \leq C .
\end{align}
On the other hand, since $1\leq \frac{6}{5-2s}\leq 2$ and $u\in L^\infty(0,\infty; L^1\cap L^2(\R^3))$, we deduce by interpolation
$$ \int_0^T\|u\|_{L^{\frac{6}{5-2s}}(\R^3)}^2 dt\leq C T , $$
which, together with \eqref{est.uDeltap.1} and \eqref{uff.1}, yields
\begin{align}
 \int_0^T\IRd u (-\Delta)^s p dx dt \leq C T^{1/2},\qquad T>1.\label{est.uDeltap}
\end{align}
From \eqref{est.u3.1}, \eqref{est.up}, \eqref{est.unap2}, \eqref{est.uDeltap} we conclude
\begin{align}
 \int_0^T\IRd u^3 dx dt \leq C T,\qquad T>1.\label{est.u3}
\end{align}
{\em Step 3: bound for $\IRd |x|\left( u^2 + \frac{1}{2}|\nabla p|^2 \right)dx$.}
Let us compute
\begin{align*}
 \frac{d}{dt}\IRd |x| u^2 dx &= -2\IRd\frac{x}{|x|}\cdot u^2\nabla p dx - \IRd |x| \nabla u^2\cdot\nabla p dx ,\\
 \frac{d}{dt}\IRd \frac{|x|}{2} |\nabla p|^2 dx &= -\IRd |x|\nabla p\cdot\nabla (-\Delta)^s p dx + \IRd |x| \nabla u^2\cdot\nabla p dx .
\end{align*}
Adding the above identities and integrating the resulting equation in the time interval $[0,T]$ we get 
\begin{align}\label{est.xh.1}
 \IRd |x|\left( u^2(T) + \frac{1}{2}|\nabla p(T)|^2 \right)dx 
 = \IRd |x|\left( u_0^2 + \frac{1}{2}|\nabla p_0|^2 \right)dx + I_1 + I_2,\\
 \nonumber
 I_1 = -2\int_0^T\IRd\frac{x}{|x|}\cdot u^2\nabla p dx dt,\quad I_2 = -\int_0^T\IRd |x|\nabla p\cdot\nabla (-\Delta)^s p dx dt .
\end{align}
Let us now estimate $I_1$. H\"older inequality yields
\begin{align*}
 \frac{1}{2}I_1 &\leq \int_0^T\IRd u^2 |\nabla p| dx dt\leq 
 \int_0^T\|\nabla p\|_{L^{\frac{6}{3-2s}}(\R^3)}\|u\|^2_{L^{\frac{12}{3+2s}}(\R^3)}dt\\
 &\leq \left(\int_0^T\|\nabla p\|_{L^{\frac{6}{3-2s}}(\R^3)}^2 dt\right)^{1/2}
 \left(\int_0^T\|u\|_{L^{\frac{12}{3+2s}}(\R^3)}^4 dt\right)^{1/2} .
\end{align*}
Thanks to \eqref{est.nap.Sob} we deduce
\begin{align}\label{I1.1}
 I_1 \leq C \left(\int_0^T\|u\|_{L^{\frac{12}{3+2s}}(\R^3)}^4 dt\right)^{1/2} \leq C \left(\int_0^T\|u\|_{L^{3}(\R^3)}^{6-4s} dt\right)^{1/2}
\end{align}
using the interpolation 
\begin{align*}
 \|u\|_{L^{\frac{12}{3+2s}}(\R^3)} \leq \|u\|_{L^2(\R^3)}^{1-\omega}\|u\|_{L^3(\R^3)}^\omega,\quad \omega = \frac{3-2s}{2}.
\end{align*}
Note that  $6-4s\leq 3$ if $s\geq 3/4$. Therefore from H\"older's inequality and \eqref{est.u3} it follows
\begin{align}\label{I1}
 I_1\leq C \left(\int_0^T\|u\|_{L^{3}(\R^3)}^{3} dt\right)^{\frac{3-2s}{3}} T^{\frac{4s-3}{6}}\leq 
 C T^{\frac{3-2s}{3}} T^{\frac{4s-3}{6}} = C T^{1/2} .
\end{align}
Let us now consider $I_2$:
\begin{align}\label{I2.1}
 I_2 &= -\int_0^T\IRd |x|\nabla p\cdot\nabla (-\Delta)^s p dx dt \\ \nonumber
 &= -\int_0^T\IRd (-\Delta)^{s/2}(|x|\nabla p)\cdot (-\Delta)^{s/2}\nabla p\, dx dt .
\end{align}
Let us compute
\begin{align*}
 (-\Delta)^{s/2}(|x|\nabla p) &= \IRd \frac{|x|\nabla p(x)-|y|\nabla p(y)}{|x-y|^{3+s}}dy\\
 &= |x|\IRd \frac{\nabla p(x)-\nabla p(y)}{|x-y|^{3+s}}dy
 + \IRd \frac{|x|-|y|}{|x-y|^{3+s}}\nabla p(y) dy\\
 &= |x|(-\Delta)^{s/2}\nabla p + \IRd \frac{|x|-|y|}{|x-y|^{3+s}}\nabla p(y) dy .
\end{align*}
Therefore \eqref{I2.1} becomes
\begin{align*}
 I_2 &= -\int_0^T\IRd |x| |(-\Delta)^{s/2}\nabla p|^2 dx dt \\
& - \int_0^T\IRd\IRd \frac{|x|-|y|}{|x-y|^{3+s}}\nabla p(y) dy\cdot (-\Delta)^{s/2}\nabla p(x) dx dt .
\end{align*}
By Young's inequality,
\begin{align*}
 I_2 &+ \int_0^T\IRd |x| |(-\Delta)^{s/2}\nabla p|^2 dx dt\\
 &\leq \int_0^T\IRd\IRd \frac{|\nabla p(y)|}{|x-y|^{2+s}}dy\cdot |(-\Delta)^{s/2}\nabla p(x)| dx dt\\
 &\leq \frac{1}{2}\int_0^T\IRd (1+|x|) |(-\Delta)^{s/2}\nabla p|^2 dx dt
 + \frac{1}{2}\int_0^T\IRd\IRd \left| \IRd \frac{|\nabla p(y)|}{|x-y|^{2+s}}dy \right|^2 \frac{dxdt}{1+|x|}.
\end{align*}
Thanks to \eqref{est.nap.Sob} we deduce
\begin{align}\nonumber
 & I_2 \leq C + C\int_0^T\IRd |f_1\ast\nabla p|^2 \frac{dx dt}{1+|x|} + C\int_0^T\IRd |f_2\ast\nabla p|^2 \frac{dx dt}{1+|x|},\\
 & f_1(x) = |x|^{-2-s}\charf{B}(x),\quad f_2(x) = |x|^{-2-s}\charf{\R^3\setminus B}(x) ,\label{f12}
\end{align}
where $B$ is the ball of center $0$ and radius $1$.
For $\eps>0$ small enough and $i=1,2$ H\"older's inequality yields
\begin{align*}
 \IRd |f_i\ast\nabla p|^2 \frac{dx}{1+|x|}
 &\leq \|f_i\ast\nabla p\|_{L^{3-\eps}(\R^3)}^2\|(1+|\cdot |)^{-1}\|_{L^{\frac{3-\eps}{1-\eps}}(\R^3)} \\
 &\leq C\|f_i\ast\nabla p\|_{L^{3-\eps}(\R^3)}^2 .
\end{align*}
Therefore
\begin{align}\label{I2.2}
 I_2 \leq C + C\int_0^T\sum_{i=1}^2\|f_i\ast\nabla p\|_{L^{3-\eps}(\R^3)}^2 dt .
\end{align}
Let us bound the term
\begin{align*}
 \|f_1\ast\nabla p\|_{L^{3-\eps}(\R^3)}\leq C \|f_1\|_{L^q(\R^3)}\|\nabla p\|_{L^{6/(3-2s)}(\R^3)},
\end{align*}
where $q=q(\eps)\geq 1$ satisfies
$$\frac{1}{q(\eps)} + \frac{3-2s}{6} = 1 + \frac{1}{3-\eps}. $$
Note that  $q(0) = \frac{6}{5+2s}$, so $(2+s)q(0)<3$. By continuity it follows that $(2+s)q(\eps)<3$ for $\eps>0$ small enough. 
As a consequence $f_1\in L^q(\R^3)$ and
\begin{align}
\exists\eps>0 :\quad 
 \int_0^T\|f_1\ast\nabla p\|_{L^{3-\eps}(\R^3)}^2 dt \leq C\int_0^T\|\nabla p\|_{L^{6/(3-2s)}(\R^3)}^2 dt \leq C\label{est.f1}
\end{align}
thanks to \eqref{est.nap.Sob}. Let us now consider, for $\eps>0$ small enough,
\begin{align*}
 \|f_2\ast\nabla p\|_{L^{3-\eps}(\R^3)}\leq C \|f_2\|_{L^{\frac{3+\eps}{2+s}}(\R^3)}\|\nabla p\|_{L^{\kappa}(\R^3)},
\end{align*}
where $\kappa=\kappa(\eps)$ satisfies
$$ \frac{2+s}{3+\eps} + \frac{1}{\kappa(\eps)} = 1 + \frac{1}{3-\eps} . $$
Given that $\eps>0$, we have $f_2\in L^{\frac{3+\eps}{2+s}}(\R^3)$, so
\begin{align*}
 \|f_2\ast\nabla p\|_{L^{3-\eps}(\R^3)}\leq C \|\nabla p\|_{L^{\kappa}(\R^3)}.
\end{align*}
We observe that $\kappa(0) = \frac{3}{2-s}$. Gagliardo-Nirenberg inequality yields
\begin{align*}
 \|\nabla p\|_{L^{\kappa(0)}(\R^3)}\leq C \|p\|_{L^1}^{1-\eta}\|\nabla p\|_{L^{6/(3-2s)}(\R^3)}^\eta,
 \quad \eta = \frac{4+2s}{5+2s}.
\end{align*}
From the above inequality and \eqref{est.p1} it follows
\begin{align*}
 \int_0^T\|\nabla p\|_{L^{\kappa(0)}(\R^3)}^2 dt \leq C T^{2(1-\eta)}\int_0^T\|\nabla p\|_{L^{6/(3-2s)}(\R^3)}^{2\eta} dt ,
\end{align*}
and by applying H\"older's inequality and \eqref{est.nap.Sob} we get
\begin{align*}
 \int_0^T\|\nabla p\|_{L^{\kappa(0)}(\R^3)}^2 dt \leq C T^{3(1-\eta)}\left(\int_0^T\|\nabla p\|_{L^{6/(3-2s)}(\R^3)}^{2} dt\right)^{\eta}
 \leq C T^{3(1-\eta)} .
\end{align*}
It holds $3(1-\eta) = \frac{3}{5+2s}\leq\frac{6}{13}<\frac{1}{2}$ since $s\geq\frac{3}{4}$. 
If $\eps>0$ is small enough, by arguing in the same way one can show 
\begin{align*}
 \int_0^T\|\nabla p\|_{L^{\kappa(\eps)}(\R^3)}^2 dt \leq C T^{3(1-\eta(\eps))}
\end{align*}
for some $\eta(\eps)\in [0,1]$ such that $3(1-\eta(\eps))<1/2$ (since $\eta(\eps)$ is continuous). 
We conclude 
\begin{align}
\exists\eps>0 :\quad 
 \int_0^T\|f_2\ast\nabla p\|_{L^{3-\eps}(\R^3)}^2 dt \leq C\int_0^T\|\nabla p\|_{L^{\kappa(\eps)}(\R^3)}^2 dt \leq C T^{1/2} . \label{est.f2}
\end{align}
From \eqref{I2.2}--\eqref{est.f2} we obtain
\begin{align}
 I_2 \leq C T^{1/2},\quad T>1.\label{I2}
\end{align}
The Lemma's statement follows from \eqref{est.xh.1}, \eqref{I1}, \eqref{I2}. This finishes the proof.% of Lemma \eqref{lem.xH}.
\end{proof}
We are now ready to prove Theorem ~\ref{thr.ltb}.
\begin{proof}[Proof of Theorem ~\ref{thr.ltb}]
 By using $2u$ as a test function in the density equation we get that
\begin{align*}
      \int_{\mathbb{R}^3} u^2 dx|_{t=0}^{t=T} + \int_0^T \int_{\mathbb{R}^3} \na (u^2)\cdot \na p dxdt =0.
\end{align*}
In \cite{CGZ18} it was shown that
\begin{align*}
      \int_0^T \int_{\mathbb{R}^3} \na (u^2)\cdot \na p dxdt = \cA[\na (u^2), \na (u^2)],
\end{align*}
where $\cA(\cdot,\cdot)$ defines a scalar product, and any sequence that is Cauchy in the $\cA$-norm converges almost everywhere. 
By applying the representation of $\cA$ in terms of the Fourier transform, we get that
\begin{align}\label{est.1}
      T\sum_{m=0}^{\infty}\int_{\R^3} \frac{|k|^{2s}(1-e^{-|k|^{2s}T})}{|k|^{4s} + m^2/T}|w_m(k)|^2 dk \leq C,
\end{align}
where $w_m(k)=\left[\mathcal{F}_t \mathcal{F}_x \na (u^2)\right]_m(k)$, where $\mathcal{F}_x$ denotes the Fourier-transform in space and $\mathcal{F}_t$ the Fourier-transform in time. For $m=0$ in \eqref{est.1} we get 
\begin{align}\label{est.2}
      T\int_{\R^3}\frac{1-e^{-|k|^{2s}T}}{|k|^{2s}}|w_0(k)|^2 dk \leq C,
\end{align}
where 
\begin{align*}
 w_0(k)=\frac{1}{\sqrt{2T}}\int_{-T}^{T}ik\hat{u^2}(k,t)dk = \sqrt{\frac{2}{T}}ik\hat{U}(k,T),
\end{align*}
where $U(k,t):= \int_0^t u^2(x,\tau)d\tau$ and $\hat{\cdot}=\mathcal{F}_x$ denotes the Fourier transform with respect to $x$, where we extended $u$ as even function of $t$. Thus, \eqref{est.2} implies 
\begin{align}\label{est.3}
 \int_{\R^3} \frac{1-e^{-|k|^{2s}T}}{|k|^{2s-2}}|\hat U(k,T)|^2dk \leq C.
\end{align}
Now we consider
\begin{align*}
 \int_{\R^3}\frac{e^{-|k|^{2s}T}}{|k|^{2s-2}}|\hat{U}(k,T)|^2 dk \leq \|\hat{U}\|^2_{L^{\infty}(\mathbb{R}^3)} \int_{\mathbb{R}^3}e^{-|k|^{2s}T}|k|^{2-2s}dk.
\end{align*}
However, it holds 
\begin{align*}
 \|\hat{U}(T)\|^2_{L^{\infty}(\R^3)}\leq C\| U(T) \|_{L^1(\R^3)}^2 = C \left(\int_0^T \int_{\R^3}u^2 dxdt\right)^2 \leq CT^2
\end{align*}
thanks to the entropy inequality \eqref{entropy.diss}. Consequently, it follows by using the rescaling $\tilde{k}=kT^{1/(2s)}$ 
\begin{align*}
\int_{\R^3}\frac{e^{-|k|^{2s}T}}{|k|^{2s-2}}|\hat{U}(k,T)|^2 dk \leq CT^2 \int_{\R^3}\frac{e^{-|k|^{2s}T}}{|k|^{2s-2}}dk 
= C T^{3-\frac{5}{2s}}\int_{\R^3}\frac{e^{{-|\tilde{k}|}^{2s}}}{|\tilde{k}|^{2s-2}}d\tilde{k} \leq C T^{3-\frac{5}{2s}} .
\end{align*}
We deduce from \eqref{est.3} that
\begin{align}\label{est.4}
    \int_{\R^3}|k|^{2(1-s)}|\hat{U}(k,T)|^2 dk \leq C T^{3-\frac{5}{2s}}.
\end{align}
%and in particular 
%\begin{align*}
% (-\Delta)^{\frac{1-s}{2}}U \in L^{\infty}(0,\infty;L^2(\R^3))
%\end{align*}
Net now $R=R(T)>0$ be a generic constant depending on $T$. Then \eqref{est.4} implies 
$$R^{2(1-s)}\int_{B_R^c}|\hat{U}(k,T)|^2 dk \leq C T^{3-\frac{5}{2s}}.$$
On the other hand, it holds for a ball $B_R$ of radius $R>0$ centered around the origin
\begin{align*}
    \int_{B_R}|\hat{U}(k,T)|^2 dk \leq |B_{R}|\|\hat{U}(T)\|^2_{L^{\infty}(\R^3)}\leq C R^3 T^2.
\end{align*}
Thus, it follows 
\begin{align*}
 \int_{\R^3}|\hat{U}(k,T)|^2dx \leq C(R^{-2(1-s)}T^{3-\frac{5}{2s}} + R^3T^2).
\end{align*}
We now minimize the right-hand side: we choose $R=R(T)>0$ such that 
\begin{align*}
 0 = \frac{d}{d\rho}(\rho^{-2(1-s)}T^{3-\frac{5}{2s}} + \rho^3 T^2)_{\vert_{\rho=R}}
 = -2(1-s)R^{-3+2s}T^{3-\frac{5}{2s}} + 3R^{2}T^2
\end{align*}
that is
$$ R = c T^{-1/2s} . $$
It follows
\begin{align*}
 \int_{\R^3}|\hat{U}(k,T)|^2dx \leq C T^{2 - 3/2s} .
\end{align*}
The fact that the Fourier transform is an isometry $L^2\to L^2$ and the definition of $U$ imply
\begin{align}\label{thg.1}
 \int_{\R^3}\left( \int_0^T u^2(x,t)dt\right)^2 dx \leq C T^{2 - 3/2s} .
\end{align}
From \eqref{thg.1} and Jensen's inequality it follows
\begin{align*}
 \frac{C T^{2 - 3/2s}}{|B_R|} \geq \frac{1}{|B_R|}\int_{B_R}\left( \int_0^T u^2(x,t)dt\right)^2 dx
 \geq \left(\frac{1}{|B_R|}\int_{B_R}\int_0^T u^2(x,t)dt dx\right)^2
\end{align*}
and so
\begin{align}\label{thg.2}
 \int_0^T \int_{B_R}u^2(x,t)dx dt \leq C R^{3/2} T^{1-3/4s}\quad \forall R>0.
\end{align}
However, Lemma 1 implies that 
\begin{align}\label{thg.3}
 \int_0^T \int_{\R^3\setminus B_R}u^2(x,t)dx dt \leq 
 \frac{T}{R}\sup_{t\in [0,T]}\int_{\R^3\setminus B_R} |x|\left( u^2(x,t) + \frac{1}{2}|\nabla p(x,t)|^2 \right)dx
 \leq \frac{T^{3/2}}{R} .
\end{align}
Summing \eqref{thg.2} and \eqref{thg.3} leads to
\begin{align}\label{thg.4}
 \int_0^T\IRd u^2 dx dt \leq C\left( R^{3/2}T^{1-3/4s} + R^{-1}T^{3/2} \right)\quad T\geq 1,~~ R>0.
\end{align}
Again, we choose $R=R(T)$ such that the right-hand side of \eqref{thg.4} is minimal, which yields $R=c T^{1/5+3/10s}$.
It follows
\begin{align}\label{est.u}
 \int_0^T\IRd u^2 dx dt \leq C T^{\frac{13-3/s}{10}},\quad T\geq 1.
\end{align}
Let us now find a similar estimate for $\nabla p$. Gagliardo-Nirenberg inequality leads to
\begin{equation}
 \|\nabla p\|_{L^2(\R^3)}\leq C \|p\|_{L^1(\R^3)}^{1-\theta}\|\nabla p\|_{L^{6/(3-25)}(\R^3)}^\theta , \quad 
\theta = \frac{5}{5+2s}.\label{GN.nap}
\end{equation}
Taking the power $2/\theta$ of both members of the above inequality and integrating it in time yield
\begin{align*}
\int_0^T\|\nabla p\|_{L^2(\R^3)}^{2/\theta}dt &\leq C \int_0^T\|p\|_{L^1(\R^3)}^{2(1-\theta)/\theta}\|\nabla p\|_{L^{6/(3-25)}(\R^3)}^2 dt\\
&\leq C \|p\|_{L^\infty(0,T; L^1(\R^3))}^{2(1-\theta)/\theta}\int_0^T\|\nabla p\|_{L^{6/(3-25)}(\R^3)}^2 dt .
\end{align*}
From \eqref{est.p1} it follows
\begin{align*}
 \int_0^T\|\nabla p\|_{L^2(\R^3)}^{2/\theta}dt
 \leq C T^{2(1-\theta)/\theta}\int_0^T\|\nabla p\|_{L^{6/(3-25)}(\R^3)}^2 dt .
\end{align*}
Sobolev's embedding $H^{s}(\R^3)\hookrightarrow L^{6/(3-25)}(\R^3)$ leads to
\begin{align*}
 \int_0^T\|\nabla p\|_{L^2(\R^3)}^{2/\theta}dt
 \leq C T^{2(1-\theta)/\theta}\int_0^T\|\nabla (-\Delta)^{s/2} p\|_{L^{2}(\R^3)}^2 dt ,
\end{align*}
while \eqref{entropy.diss} allows us to deduce
\begin{align*}
 \int_0^T\|\nabla p\|_{L^2(\R^3)}^{2/\theta}dt
 \leq C T^{2(1-\theta)/\theta} \qquad T>1.
\end{align*}
The definition \eqref{GN.nap} of $\theta$ implies
\begin{align}\label{est.nap}
 \int_0^T\|\nabla p\|_{L^2(\R^3)}^{(10+4s)/5}dt
 \leq C T^{4s/5} \qquad T>1.
\end{align}
By summing \eqref{est.u} and \eqref{est.nap} and noticing that $\frac{13-3/s}{10}> \frac{4s}{5}$ for $\frac{3}{4}\leq s \leq 1$ we obtain
\begin{align*}
 \int_0^T\left(\|u\|_{L^2(\R^3)}^2 + \|\nabla p\|_{L^2(\R^3)}^{(10+4s)/5} \right)dt 
 \leq C (T^{\frac{13-3/s}{10}} +  T^{\frac{4s}{5}})\leq C T^{\frac{13-3/s}{10}} \qquad T>1.
\end{align*}
Since $(10+4s)/5 \geq 2$ and $u \in L^\infty(0,\infty; L^2(\R^3))$ it follows that
\begin{align*}
 \int_0^T\left(\|u\|_{L^2(\R^3)}^{(10+4s)/5} + \|\nabla p\|_{L^2(\R^3)}^{(10+4s)/5} \right)dt 
 \leq C T^{\frac{13-3/s}{10}} \qquad T>1.
\end{align*}
By a convexity argument 
\begin{align*}
\int_0^T & H[u(t),p(t)]^{1+2s/5} dt\\
& = \int_0^T\left(\|u\|_{L^2(\R^3)}^{2} + \frac{1}{2}\|\nabla p\|_{L^2(\R^3)}^{2} \right)^{1+2s/5} dt\\
& \leq C\int_0^T\left(\|u\|_{L^2(\R^3)}^{(10+4s)/5} + \|\nabla p\|_{L^2(\R^3)}^{(10+4s)/5} \right)dt \\
& \leq C T^{\frac{13-3/s}{10}} \qquad T>1.
\end{align*}
On the other hand $t\mapsto H[u(t),p(t)]$ is non-increasing in time, so
$$
\int_0^T H[u(t),p(t)]^{1+2s/5} dt \geq \int_0^T H[u(T),p(T)]^{1+2s/5} dt = T H[u(T),p(T)]^{1+2s/5} .
$$
Putting the two previous inequalities together leads to
$$ T H[u(T),p(T)]^{1+2s/5}\leq C T^{\frac{13-3/s}{10}} \qquad T>1, $$
which yields the statement of the Theorem. This finished the proof.
\end{proof}

\section{Proof of Theorem \ref{thr.ws}}\label{sec:thm2}
\begin{proof}[Proof of Theorem ~\ref{thr.ws}]
Let us compute the time derivative of $H[(u(t),p(t))\vert (v(t),q(t))]$:
\begin{align}\label{ws.1}
 \frac{d}{dt}H[(u(t),p(t))\vert (v(t),q(t))] = 
 &- 2\IRd \nabla(u-v)\cdot (u\nabla p - v\nabla q) dx\\
 \nonumber
 &- \IRd |(-\Delta)^{s/2}\nabla(p-q)|^2 dx\\
 \nonumber
 &+ \IRd \nabla(p-q)\cdot \nabla (u^2 - v^2) dx
\end{align}
Let us consider the term
\begin{align*}
 & - 2\IRd \nabla(u-v)\cdot (u\nabla p - v\nabla q) dx\\
 &= -\IRd\left( 
 \na u^2\cdot\na p + \na v^2\cdot\na q - 2u\na v\cdot\na p - 2v\na u\cdot\na q 
 \right)dx\\
 &= -\IRd\left(
 \nabla (u^2 - v^2)\cdot\nabla(p-q) + \na v^2\cdot\na p + \na u^2\cdot\na q
 - 2u\na v\cdot\na p - 2v\na u\cdot\na q 
 \right)dx\\
 &= -\IRd \nabla (u^2 - v^2)\cdot\nabla(p-q) dx
  -2\IRd\left(
 (v-u)\na v\cdot\na p + (u-v)\na u\cdot\na q
 \right)dx\\
 &= -\IRd \nabla (u^2 - v^2)\cdot\nabla(p-q) dx\\
 &\quad -2\IRd (u-v)\left(
 \na (u-v)\cdot\na q - \na v\cdot\na (p-q)
 \right)dx\\
 &= -\IRd \nabla (u^2 - v^2)\cdot\nabla(p-q) dx 
 + \IRd (u-v)^2\Delta q dx + 2\IRd (u-v)\na (p-q)\cdot\na v dx
\end{align*}
Therefore \eqref{ws.1} becomes
\begin{align}\label{ws.2}
 \frac{d}{dt}H[(u(t),p(t)) &\vert (v(t),q(t))] + 
 \IRd |(-\Delta)^{s/2}\nabla(p-q)|^2 dx \\
 &= \IRd (u-v)^2\Delta q dx + 2\IRd (u-v)\cdot\na (p-q)\na v dx . \nonumber 
\end{align}
Let us bound the right-hand side of \eqref{ws.2}. It holds trivially
\begin{align}
 \IRd (u-v)^2\Delta q dx \leq 
 \left(\sup_{\R^3}\Delta q\right)
 H[(u,p) &\vert (v,q)] . \label{ws.3}
\end{align}
Let us now consider
\begin{align}\label{ws.4}
 & 2\IRd (u-v)\na (p-q)\cdot\na v\, dx \leq 
 \IRd (u-v)^2 dx + \IRd |\na v|^2 |\na(p-q)|^2 dx .
\end{align}
Let $2<\lambda<\frac{6}{3-2s}$, $\lambda'=\frac{\lambda}{\lambda-1}$.
H\"older inequality allows us to write
\begin{align*}
& \IRd |\na v|^2 |\na(p-q)|^2 dx \leq 
C \|\na v\|_{L^{2\lambda'}(\R^3)}^2
\|\na(p-q)\|_{L^{2\lambda}(\R^3)}^2 .
\end{align*}
By interpolation
\begin{align*}
& \IRd |\na v|^2 |\na(p-q)|^2 dx \leq 
C \|\na v\|_{L^{2\lambda'}(\R^3)}^2
\|\na(p-q)\|_{L^{2}(\R^3)}^{2\rho}
\|\na(p-q)\|_{L^{\frac{6}{3-2s}}(\R^3)}^{2(1-\rho)},
\end{align*}
for some $\rho \in (0,1)$. 
Moreover, thanks to the Sobolev embedding 
$H^s(\R^3)\hookrightarrow L^{\frac{6}{3-2s}}(\R^3)$ it follows
\begin{align*}
& \IRd |\na v|^2 |\na(p-q)|^2 dx \leq 
C \|\na v\|_{L^{2\lambda'}(\R^3)}^2
\|\na(p-q)\|_{L^{2}(\R^3)}^{2\rho}
\|(-\Delta)^{s/2}\na(p-q)\|_{L^{2}(\R^3)}^{2(1-\rho)}.
\end{align*}
Young's inequality yields
\begin{align*}
& \IRd |\na v|^2 |\na(p-q)|^2 dx \leq 
C \|\na v\|_{L^{2\lambda'}(\R^3)}^{2/\rho}
\|\na(p-q)\|_{L^{2}(\R^3)}^{2}
+\frac{1}{2}\|(-\Delta)^{s/2}\na(p-q)\|_{L^{2}(\R^3)}^{2}.
\end{align*}
From the above inequality and \eqref{ws.4} we deduce
\begin{align}\label{ws.5}
& 2\IRd (u-v)\na (p-q)\cdot\na v\, dx\\ 
\nonumber
&\leq \IRd (u-v)^2 dx + C \|\na v\|_{L^{2\lambda'}(\R^3)}^{2/\rho}
\IRd |\na(p-q)|^2 dx
+\frac{1}{2}\|(-\Delta)^{s/2}\na(p-q)\|_{L^{2}(\R^3)}^{2}\\
\nonumber
&\leq C(1 + \|\na v\|_{L^{2\lambda'}(\R^3)}^{2/\rho})
H[(u,p)\vert (v,q)] + \frac{1}{2}\|(-\Delta)^{s/2}\na(p-q)\|_{L^{2}(\R^3)}^{2}
\end{align}
Adding \eqref{ws.2}, \eqref{ws.3}, \eqref{ws.5} yields
\begin{align*}
 \frac{d}{dt}H[(u,p)\vert (v,q)] \leq C\left(1 +
 \sup_{\R^3}\Delta q
 + \|\na v\|_{L^{2\lambda'}(\R^3)}^{2/\rho}
 \right)H[(u,p)\vert (v,q)] .
\end{align*}
Since $2 < \lambda < \frac{6}{3-2s}$, then 
$\lambda' = \frac{\lambda}{\lambda-1} > \frac{6}{3+2s}$. Therefore it is
possible to choose $\lambda$ such that
$2\lambda' = \frac{12}{3+2s} + \nu$. As a consequence
$\nabla v\in L^\infty(0,\infty; L^{2\lambda'}(\R^3))$.
Gronwall's Lemma allows us to obtain the theorem's statement with
$$ K = C\left(1 +
 \sup_{\R^3\times(0,\infty)}\Delta q
 + \|\na v\|_{L^\infty(0,\infty; L^{\frac{12}{3+2s} + \nu}(\R^3))}^{2/\rho}
 \right) . $$
This finishes the proof.
\end{proof}

\section{Proof of Theorem \ref{cor.ex}}\label{sec:thm4}

\begin{proof}
The only point of the existence proof where the assumption $s\geq 3/4$ is used is the proof that $\urho$ is bounded in
$L^3(\R^3\times (0,T))$, which is in turn required to show that 
\begin{align}
\int_0^T\IRd ((\urho)^2 - v)(\prho - p)dx dt \to 0\quad\mbox{as }\rho\to 0, \label{lim.R3}
\end{align}
where $v$ is the weak limit of $(\urho)^2$ in $L^{3/2}(\R^3\times (0,T))$. However, in the case where $\R^3$ is replaced by $\T^3$,
it is possible to show the analogue of \eqref{lim.R3} without employing the bound for $\urho$ in $L^3(\R^3\times (0,T))$.
Indeed, under the assumption $s>1/2$, it holds that $H^{s+1}(\T^3)\hookrightarrow L^\infty(\T^3)$ compactly. This fact, combined with
the strong convergence of $\prho$ in $L^1(\T^3\times (0,T))$, yields the strong convergence of $\prho$ in 
$L^1(0,T; L^\infty(\T^3))$. In particular $\prho$ is Cauchy in $L^1(0,T; L^\infty(\T^3))$, i.e.~for every $\eps>0$ there is $\delta>0$
such that 
$$\|\prho - \psg\|_{L^1(0,T; L^\infty(\T^3))}<\eps \quad\mbox{for }\rho,\sigma<\delta. $$
Since $\urho$ is bounded in $L^\infty(0,T; L^2(\T^3))$ (thanks to the entropy inequality), it follows
\begin{align*}
 \int_0^T\int_{\T^3} ((\urho)^2 - (\usg)^2)(\prho - \psg)dx dt \leq 
 C\|\prho - \psg\|_{L^1(0,T; L^\infty(\T^3))} < C\eps\quad\rho,\sigma<\delta.
\end{align*}
This means that $(\urho)^2$ is Cauchy with respect to the norm $\|\cdot\|_{\mathcal A}$.
We point out that the positivity of the quadratic form $\mathcal A$ can also be showed through energy methods (by testing the second equation 
in \eqref{1L} against $p$ under assumption that $p(0)=0$), so it holds also in the torus case.

At this point, one proceeds like in the $\R^3$ case to show that from the property that $(\urho)^2$ is Cauchy with respect to $\|\cdot\|_{\mathcal A}$
it follows that $\urho$ is (up to subsequences) a.e.~convergent to $u$ in $\T^3\times (0,T)$, and therefore $v = u^2$. This finishes the proof.
\end{proof}

%%%%%%%%%%%%%%%%%%%%%%%%%%%%%%%%%%%%%%%%%%%%%%%%%%%%%%%%%%%%%%%%%%%%%%%%%%%%%%%

\end{document}